\nonstopmode \numberwithin{equation}{section}
\newtheorem{theorem}{Theorem}
\newtheorem{corollary}{Corollary}[section]
\begin{document}
\title[A New Class of Integrals Involving Extended Hypergeometric Function]{ A New Class of Integrals Involving Extended Hypergeometric Function}

\author[G. Rahman, A. Ghaffar, K.S. Nisar and S. Mubeen ]{G. Rahman, A. Ghaffar, K.S. Nisar* and S. Mubeen}

\address{Gauhar Rahman:    Department of Mathematics, International Islamic  University, Islamabad, Pakistan}
\email{gauhar55uom@gmail.com}

\address{A. Ghaffar:    Department of Mathematical Science, BUITEMS Quetta, Pakistan}
\email{abdulghaffar.jaffar@gmail.com}

\address{Kottakkaran Sooppy  Nisar:    Department of Mathematics, College of Arts and Science-Wadi Al-dawaser, 11991,
Prince Sattam bin Abdulaziz University, Alkharj, Kingdom of Saudi Arabia}
\email{ksnisar1@gmail.com, n.sooppy@psau.edu.sa}

\address{Shahid Mubeen: Department of Mathematics, University of Sargodha,   Sargodha, Pakistan}
\email{smjhanda@gmail.com}

\subjclass[2010]{33B20, 33C20, 33C45, 33C60, 33B15, 33C05}

\thanks{$^{*}$ Corresponding author}

\keywords{extended hypergeometric function; Lavoie-Trottier Integral formula; Gauss' hypergeometric function; extended Wright-type hypergeometric functions}

\begin{abstract}
Our purpose in this present paper is to investigate generalized integration formulas containing the extended generalized hypergeometric function and obtained results are expressed in terms of extended hypergeometric function. Certain special cases of the main results presented here are also pointed out for the extended Gauss' hypergeometric and confluent hypergeometric functions.
\end{abstract}

\maketitle

\section{introduction}
In many areas of applied mathematics, various types of special functions become essential tools for scientists and engineers.
The continuous development of mathematical physics, probability theory and other areas has led to new classes of special functions and their extensions and generalizations. The study of one-variable hypergeometric functions appear in the work of Euler, Gauss, Riemann, and Kummer and the integral representations were studied by Barnes and Mellin. The special properties of one variable hypergeometric function were studied by  Schwarz and Goursat. For more details about the recent works in the field of dynamical systems theory, stochastic systems, non-equilibrium statistical mechanics and quantum mechanics, the readers may refer to the recent work of the researchers \cite{Gorenflo,Gorenflo1, Nisar,Nisar1,Nisar2,Podlubny,Prabhakar} and the references cited therein.

Throughout this paper, we denote by $\mathbb{N}, \mathbb{Z}^-$ and $\mathbb{C}$ the sets of positive integers, negative integers and complex numbers,
respectively, and also $\mathbb{N}_0:= \mathbb{N} \cup\{0\}$ and $\mathbb{Z}_0^-:=\mathbb{Z}^-\cup\{0\}$.

The extended generalized hypergeometric function is defined by Srivastava {\it et al.} \cite[p.487, Eq.(15)]{Srivastava}:
\begin{eqnarray}\label{1}
_{r}F_{s}\left[\begin{array}{c l}
              (a_{1},p),a_{2},\cdots, a_{r};\quad \quad \quad\\\left.
             \quad \quad \quad\quad \quad \quad\quad \quad \quad\quad z \right.\\\left.
              b_{1},\cdots, b_{s};\quad\quad\quad\,\right.
           \end{array}\right]=\sum_{n=0}^{\infty}\frac{(a_{1};p)_{n}(a_{2})_{n}\cdots (a_{r})_{n}}{(b_{1})_{n}\cdots (b_{s})_{n}}\frac{z^{n}}{n!}
\end{eqnarray}
where, in terms of generalized Pochammer symbol $(\mu;p)_{v}$\cite[p.485, Eq.(8)]{Srivastava}:
\[
    (\mu;p)_{v}= 
\begin{cases}
    \frac{\Gamma_{p}(\mu+v)}{\Gamma(v)},& (\Re(p)>0, \mu,v \in \mathbb{C})\\
    (\mu)_{v},              & (p=0, \mu, v\in \mathbb{C})
\end{cases}
\]

Here $\Gamma_{p}(z)$ is the generalized gamma function introduced by Chaudhry and Zubair \cite[p. 9, Eq.(1.66)]{CZ} as follows:

\[
    {\Gamma}_{p}(z)= 
\begin{cases}
    \int_{0}^{\infty}{t^{z-1}exp\left(-p-\frac{p}{t}\right)dt},& (\Re(p)>0, z \in \mathbb{C})\\
    {\Gamma}(z),              & (p=0, \Re(z)>0)
\end{cases}
\]

The corresponding extensions of Gauss's hypergeometric and confluent hypergeometric functions are as follows:
\begin{eqnarray}\label{2}
_{2}F_{1}\left[\begin{array}{c l}
              (a,p),\beta;\quad \quad\\\left.
               \quad \quad\quad \quad \quad\quad z \right.\\\left.
              \gamma;\quad\,\right.
           \end{array}\right]=\sum_{n=0}^{\infty}\frac{(a,p)_{n}\beta_{n}z^n}{(\gamma_{n}n!},
\end{eqnarray}
and
\begin{eqnarray}\label{3}
_{1}F_{1}\left[\begin{array}{c l}
              (a,p);\quad \quad\\\left.
                \qquad \qquad\qquad z \right.\\\left.
              \gamma;\,\quad\right.
           \end{array}\right]=\sum_{n=0}^{\infty}\frac{(a,p)_{n}z^n}{(\beta_{1})_{n}(\gamma_{n}n!},
\end{eqnarray}
where $a,\gamma\in \mathbb{C}$.

In this paper, we derive two new integral formulas involving the generalized hypergeometric function (\ref{1}). Further we
give corollaries as special cases for the extended Gauss' hypergeometric and confluent hypergeometric functions. For
the present investigation, we need the following result of  Oberhettinger \cite{Oberhettinger}.
\begin{eqnarray}\label{5}
\int_0^\infty z^{\alpha-1} (z+b+\sqrt{z^{2}+2bz})^{-\beta} dz=2\beta b^{-\beta}\left(\frac{b}{2}\right)^{\alpha}\frac{\Gamma(2\alpha)
\Gamma(\beta-\alpha)}{\Gamma(1+\alpha+\beta)}.
\end{eqnarray}
where $0<\Re(\alpha) <\Re(\beta) $.
For various other investigations involving certain special functions, interested reader may be referred to several recent papers on the subject (see, for example, \cite{
Baleanu,Choi,Choi1,Srivastava1,Srivastava3,Srivastava4} and the references cited in each of these papers.

\section{Main Result}
In this section, the generalized integral formulas involving the extended generalized hypergeometric function  defined in (\ref{1} are established here by inserting with the suitable argument in the integrand of (\ref{5}) and we express the obtained result in terms of an extended Wright-type hypergeometric function.
\begin{theorem}\label{t1}
Let $a_{1},a_{2},...,a_{r},\delta \in \mathbb{C};$ and $\beta_{1},\beta_{2},...,\beta_{s} \in \mathbb{C}/\mathbb{Z}_{0}^{-};$ with $ \Re(p)>0;
 \Re(\mu)>\Re(\delta)>0; p\geq0$ and $z>0$. Then the following formula holds true:
\begin{eqnarray*}\label{2.1}
\int_0^\infty z^{\delta-1} (z+b+\sqrt{z^{2}+2bz})^{-\mu} {_{r}F_{s}}\left[\begin{array}{c l}
              (a_{1},p),a_{2},...,a_{r};\quad \quad \quad\quad\quad  \\\left.
               \quad\quad \quad \quad\quad \quad \frac{y}{(z+b+\sqrt{z^{2}+2bz})} \right.\\\left.
              \beta_{1},\beta_{2},...,\beta_{s};\quad \quad \quad\quad \quad \quad \quad \,\right.
           \end{array}\right]dz
\end{eqnarray*}
\begin{eqnarray}\label{2.2}
&=&\frac{\Gamma(2\delta)b^{\delta-\mu}\Gamma(\mu+1)\Gamma(\mu-\delta)}{2^{\delta-1}\Gamma(\mu)\Gamma(1+\delta+\mu)}\notag\\
&\times&{}{_{r+2}F_{s+2}}\left[\begin{array}{c l}
              (a_{1},p),a_{2},...,a_{r}, \mu+1,\mu-\delta; \\\left.
                \quad\quad\quad \quad\quad \quad \quad \quad\quad \quad \quad \quad\quad \quad \quad\quad \quad |\frac{y}{b} \right.\\\left.
              \beta_{1},\beta_{2},...,\beta_{s},\mu,\mu+\delta+1;\, \,\right.
           \end{array}\right].
\end{eqnarray}
\end{theorem}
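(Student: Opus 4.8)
The plan is to expand the extended generalized hypergeometric function in the integrand as its defining power series \eqref{1}, interchange summation and integration, and reduce each resulting term to the Oberhettinger integral \eqref{5}. Writing the series with argument $y/(z+b+\sqrt{z^{2}+2bz})$, the $n$-th summand carries a factor $(z+b+\sqrt{z^{2}+2bz})^{-n}$ which combines with the pre-existing $(z+b+\sqrt{z^{2}+2bz})^{-\mu}$ to produce the exponent $-(\mu+n)$. After the interchange the left-hand side becomes a series whose general term is
\[
\frac{(a_{1};p)_{n}(a_{2})_{n}\cdots (a_{r})_{n}}{(\beta_{1})_{n}\cdots (\beta_{s})_{n}}\,\frac{y^{n}}{n!}\int_{0}^{\infty} z^{\delta-1}\bigl(z+b+\sqrt{z^{2}+2bz}\bigr)^{-(\mu+n)}\,dz.
\]

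Next I would apply \eqref{5} to the inner integral with $\alpha=\delta$ and $\beta=\mu+n$; the hypotheses $\Re(\mu)>\Re(\delta)>0$ together with $n\geq 0$ ensure $0<\Re(\delta)<\Re(\mu+n)$, so the formula is valid for every $n$. This replaces the integral by $2(\mu+n)b^{-(\mu+n)}(b/2)^{\delta}\Gamma(2\delta)\Gamma(\mu+n-\delta)/\Gamma(1+\delta+\mu+n)$. The remaining work is bookkeeping with the Gamma function: using $(\mu+n)=\mu\,(\mu+1)_{n}/(\mu)_{n}$, $\Gamma(\mu+n-\delta)=\Gamma(\mu-\delta)\,(\mu-\delta)_{n}$ and $\Gamma(1+\delta+\mu+n)=\Gamma(1+\delta+\mu)\,(1+\delta+\mu)_{n}$, I would pull all $n$-independent factors outside the sum. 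The two new numerator factors $(\mu+1)_{n}$ and $(\mu-\delta)_{n}$ together with the two new denominator factors $(\mu)_{n}$ and $(1+\delta+\mu)_{n}$ are precisely what promote the $_{r}F_{s}$ to the $_{r+2}F_{s+2}$ with argument $y/b$, while the collected prefactor simplifies through $\mu\,\Gamma(\mu)=\Gamma(\mu+1)$ to the constant $\Gamma(2\delta)b^{\delta-\mu}\Gamma(\mu+1)\Gamma(\mu-\delta)/\bigl(2^{\delta-1}\Gamma(\mu)\Gamma(1+\delta+\mu)\bigr)$ displayed in \eqref{2.2}.

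The one step that requires genuine justification rather than routine algebra is the term-by-term interchange of the infinite series with the improper integral, and I expect this to be the main obstacle. I would handle it by a Fubini--Tonelli argument: on the stated domain the Oberhettinger integral of the modulus of each summand is finite, and the resulting numerical series of absolute values is dominated by a convergent hypergeometric-type series (the $_{r+2}F_{s+2}$ formed from the absolute values of the parameters), so absolute convergence legitimizes the exchange. Once this is in place, the formal computation above is rigorous and the identity \eqref{2.2} follows directly from the two elementary Gamma-to-Pochhammer identities and the final simplification of the constant.
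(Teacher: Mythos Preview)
Your proposal is correct and follows essentially the same route as the paper: expand the extended ${}_{r}F_{s}$ as its series, interchange summation and integration, apply the Oberhettinger integral \eqref{5} with $\alpha=\delta$, $\beta=\mu+n$, and then repackage the resulting Gamma quotients as Pochhammer symbols to recognise the ${}_{r+2}F_{s+2}$. Your treatment is in fact more explicit than the paper's, which simply asserts the interchange by ``uniform convergence'' and jumps directly to the final form without spelling out the identities $(\mu+n)=\mu(\mu+1)_{n}/(\mu)_{n}$, $\Gamma(\mu+n-\delta)=\Gamma(\mu-\delta)(\mu-\delta)_{n}$, etc.
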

\begin{proof}
Let $S_1$ be the left-hand side of (\ref{2.1}), and applying
\begin{eqnarray}\label{2.3}
_{r}F_{s}\left[\begin{array}{c l}
              (a_{1},p),(a_{2}),...,(a_{r});\quad \quad \quad\\\left.
             \quad \quad \quad\quad \quad \quad\quad \quad \quad\quad;z \right.\\\left.
              (\beta_{1}),(\beta_{2}),...,(\beta_{s});\quad\quad\quad\,\right.
           \end{array}\right]\nonumber=\sum_{n=0}^{\infty}\frac{(a_{1},p)_{n}(a_{2})_{n}...(a_{r})_{n}z^n}{(\beta_{1})_{n}(\beta_{2})_{n}...(\beta_{s})_{n}n!},
\end{eqnarray}
where $a_i,\beta_i\in \mathbb{C} ; i = 1, 2, ..., p; j=1,2,...,q $and $\beta_j \neq 0,-1,-2,... $ and $(z)_n$ is
the Pochhammer symbols.\\
To the integrand (\ref{2.1}), we have
\begin{eqnarray*}\label{2.4}
S_1=\int_0^\infty z^{\delta-1} (z+b+\sqrt{z^{2}+2bz})^{-\mu}\sum_{n=0}^{\infty}\frac{(a_{1},p)_{n}(a_{2})_{n}...(a_{r})_{n}}
{(\beta_{1})_{n}(\beta_{2})_{n}...(\beta_{s})_{n}n!}\left(\frac{y}{z+b+\sqrt{z^{2}+2bz}}\right)^{n}dz.
\end{eqnarray*}
By interchanging the order of integration and summation, which is verified by the uniform convergence of the series under the given assumption of theorem (\ref{2.1}), we have
\begin{eqnarray*}\label{2.5}
S_1=\int_0^\infty z^{\delta-1} (z+b+\sqrt{z^{2}+2bz})^{-(\mu+n)}\sum_{n=0}^{\infty}\frac{(a_{1},p)_{n}(a_{2})_{n}...(a_{r})_{n}y^n}
{(\beta_{1})_{n}(\beta_{2})_{n}...(\beta_{s})_{n}n!}dz.
\end{eqnarray*}
By using the (\ref{5}), we get
\begin{eqnarray*}\label{2.6}
S_1=\sum_{n=0}^{\infty}\frac{(a_{1},p)_{n}(a_{2})_{n}...(a_{r})_{n}y^n}
{(\beta_{1})_{n}(\beta_{2})_{n}...(\beta_{s})_{n}n!}2(\mu+n)b^{-(\mu+n)}\left(\frac{b}{2}\right)^{\delta}\frac{\Gamma(2\delta)
\Gamma(\mu+n-\delta)}{\Gamma(1+\delta+\mu+n)}.
\end{eqnarray*}
\begin{eqnarray*}\label{2.7}
S_1=\frac{\Gamma(2\delta)b^{\delta-\mu}}{2^{\delta-1}}\sum_{n=0}^{\infty}\frac{(a_{1},p)_{n}(a_{2})_{n}...(a_{r})_{n}}
{(\beta_{1})_{n}(\beta_{2})_{n}...(\beta_{s})_{n}}\frac{y^{n}}{n!b^{n}}\frac{\Gamma(\mu+n+1)
\Gamma(\mu+n-\delta)}{\Gamma(\mu+n)\Gamma(1+\delta+\mu+n)}.
\end{eqnarray*}
Upon using the (\ref{1}), we obtain
\begin{eqnarray*}
S_1=\frac{\Gamma(2\delta)b^{\delta-\mu}\Gamma(\mu+1)\Gamma(\mu-\delta)}{2^{\delta-1}\Gamma(\mu)\Gamma(1+\delta+\mu)}{_{r+2}F_{s+2}}\left[\begin{array}{c l}
              (a_{1},p),a_{2},...,a_{r}, \mu+1,\mu-\delta; \\\left.
                \quad\quad\quad \quad\quad \quad \quad \quad\quad \quad \quad \quad\quad \quad \quad\quad \quad |\frac{y}{b} \right.\\\left.
              \beta_{1},\beta_{2},...,\beta_{s},\mu,\mu+\delta+1;\, \,\right.
           \end{array}\right].
\end{eqnarray*}
\end{proof}
\begin{theorem}\label{t2}
Let $a_{1},a_{2},...,a_{r},\delta \in \mathbb{C};$ and $\beta_{1},\beta_{2},...,\beta_{s} \in \mathbb{C}/\mathbb{Z}_{0}^{-};$ with $ \Re(p)>0;
 \Re(\mu)>\Re(\delta)>0; p\geq0$ and $z>0$. Then the following formula holds true:
\begin{eqnarray*}\label{2.9}
\int_0^\infty z^{\delta-1} (z+b+\sqrt{z^{2}+2bz})^{-\mu} {_{r}F_{s}}\left[\begin{array}{c l}
              (a_{1},p),a_{2},...,a_{r};\quad \quad \quad\quad\quad  \\\left.
               \qquad\quad \quad \quad\quad \quad \frac{yz}{(z+b+\sqrt{z^{2}+2bz})} \right.\\\left.
              \beta_{1},\beta_{2},...,\beta_{s};\quad \quad \quad\quad \quad \quad \quad \,\right.
           \end{array}\right]dz
\end{eqnarray*}
\begin{eqnarray}\label{2.10}
&=&\frac{\Gamma(\mu-\delta)b^{\delta-\mu}\Gamma(\delta)\Gamma(\delta+1/2)\Gamma(\mu+1)}{2^{\delta-1}\Gamma(\mu)\Gamma(1+\delta+\mu)}\notag\\
&\times&{}{_{r+3}F_{s+3}}\left[\begin{array}{c l}
              (a_{1},p),a_{2},...,a_{r},\mu+1,\delta+1/2; \\\left.
                \quad\quad \quad\quad \quad \quad \quad\quad \quad \quad \quad\quad \quad \quad\quad \quad|\frac{yz}{b} \right.\\\left.
              \beta_{1},\beta_{2},...,\beta_{s},\mu,(\mu+\delta+1)/2,(\mu+\delta+2)/2 ;\, \,\right.
           \end{array}\right].
\end{eqnarray}
\end{theorem}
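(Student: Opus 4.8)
The plan is to follow the proof of Theorem~\ref{t1} almost verbatim, the one genuinely new feature being the extra factor $z$ sitting inside the argument of the hypergeometric function. After expanding the series this factor contributes $z^{n}$ to the $n$-th term, so that the exponent of $z$ in the integrand is raised from $\delta-1$ to $\delta+n-1$; it is precisely this shift that ultimately manufactures the three new parameters of the ${}_{r+3}F_{s+3}$.

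Concretely, I would denote by $S_2$ the left-hand side, expand the extended hypergeometric function by its series definition~(\ref{1}) with the factor $\left(\frac{yz}{z+b+\sqrt{z^2+2bz}}\right)^n$, and justify the interchange of summation and integration by the uniform convergence of the series exactly as in Theorem~\ref{t1}. The $z$-integral then reads
\[
\int_0^\infty z^{\delta+n-1}\bigl(z+b+\sqrt{z^2+2bz}\bigr)^{-(\mu+n)}\,dz,
\]
which is Oberhettinger's formula~(\ref{5}) with $\alpha=\delta+n$ and $\beta=\mu+n$. Evaluating it contributes
\[
2(\mu+n)\,b^{-(\mu+n)}\left(\tfrac{b}{2}\right)^{\delta+n}\frac{\Gamma(2\delta+2n)\,\Gamma(\mu-\delta)}{\Gamma(1+\delta+\mu+2n)},
\]
so the powers of $b$ collapse to the single $n$-independent factor $b^{\delta-\mu}$, and $\Gamma(\mu-\delta)$ pulls out of the sum since $\beta-\alpha=\mu-\delta$ no longer depends on $n$.

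The crucial and only genuinely new step is disentangling the quotient $\Gamma(2\delta+2n)/\Gamma(1+\delta+\mu+2n)$, in which $n$ enters doubled. Here I would invoke the Legendre duplication formula in its Pochhammer form $(c)_{2n}=2^{2n}\left(\tfrac{c}{2}\right)_n\left(\tfrac{c+1}{2}\right)_n$, applied once with $c=2\delta$ and once with $c=1+\delta+\mu$: this turns the numerator into $\Gamma(2\delta)\,2^{2n}(\delta)_n(\delta+\tfrac12)_n$ and the denominator into $\Gamma(1+\delta+\mu)\,2^{2n}\left(\tfrac{\mu+\delta+1}{2}\right)_n\left(\tfrac{\mu+\delta+2}{2}\right)_n$, the two factors $2^{2n}$ cancelling. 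Combined with $\mu+n=\mu\,(\mu+1)_n/(\mu)_n$, every $n$-dependent quantity is now a ratio of Pochhammer symbols, and collecting the constants into the prefactor identifies the remaining series as the extended ${}_{r+3}F_{s+3}$ of~(\ref{1}) with new upper parameters $\mu+1,\delta,\delta+\tfrac12$ and new lower parameters $\mu,\tfrac{\mu+\delta+1}{2},\tfrac{\mu+\delta+2}{2}$, which is (\ref{2.10}). I expect the main obstacle to be purely bookkeeping in this duplication step — in particular tracking the residual powers of $2$ and the $\sqrt{\pi}$ that appear when the same duplication identity is used to rewrite the constant $\Gamma(2\delta)$ as $\Gamma(\delta)\Gamma(\delta+\tfrac12)$ in the prefactor — while the remainder of the argument is a direct transcription of the proof of Theorem~\ref{t1}.
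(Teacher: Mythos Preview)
Your proposal is correct and follows essentially the same route as the paper's own proof: expand the series, interchange summation and integration, apply Oberhettinger's integral~(\ref{5}) with $\alpha=\delta+n$ and $\beta=\mu+n$, and then repackage the resulting Gamma quotients as Pochhammer symbols. Your account is in fact more explicit than the paper's at the duplication step (which the paper skips over with ``Upon using~(\ref{1})''), and you have correctly supplied the third new numerator parameter $\delta$ that the displayed~(\ref{2.10}) inadvertently omits from its ${}_{r+3}F_{s+3}$.
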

\begin{proof}
Let $S_2$ be the left-hand side of (\ref{2.10}), and applying
\begin{eqnarray}\label{2.11}
_{r}F_{s}\left[\begin{array}{c l}
              (a_{1},p),(a_{2}),...,(a_{r});\quad \quad \quad\\\left.
             \quad \quad \quad\quad \quad \quad\quad \quad \quad\quad z \right.\\\left.
              (\beta_{1}),(\beta_{2}),...,(\beta_{s});\quad\quad\quad\,\right.
           \end{array}\right]\nonumber=\sum_{n=0}^{\infty}\frac{(a_{1},p)_{n}(a_{2})_{n}...(a_{r})_{n}z^n}{(\beta_{1})_{n}(\beta_{2})_{n}...(\beta_{s})_{n}n!},
\end{eqnarray}
where $a_i,\beta_i\in \mathbb{C} ; i = 1, 2, ..., p; j=1,2,...,q $and $\beta_j \neq 0,-1,-2,... $ and $(z)_n$ is
the Pochhammer symbols.\\
To the integrand (\ref{2.10}), we have
\begin{eqnarray*}\label{2.12}
S_2=\int_0^\infty z^{\delta-1} (z+b+\sqrt{z^{2}+2bz})^{-\mu}\sum_{n=0}^{\infty}\frac{(a_{1},p)_{n}(a_{2})_{n}...(a_{r})_{n}}
{(\beta_{1})_{n}(\beta_{2})_{n}...(\beta_{s})_{n}n!}\left(\frac{yz}{z+b+\sqrt{z^{2}+2bz}}\right)^{n}dz.
\end{eqnarray*}
By interchanging the order of integration and summation, which is verified by the uniform convergence of the series under the given assumption of theorem (\ref{2.10}), we have
\begin{eqnarray*}\label{2.13}
S_2=\int_0^\infty z^{\delta+n-1} (z+b+\sqrt{z^{2}+2bz})^{-(\mu+n)}\sum_{n=0}^{\infty}\frac{(a_{1},p)_{n}(a_{2})_{n}...(a_{r})_{n}y^n}
{(\beta_{1})_{n}(\beta_{2})_{n}...(\beta_{s})_{n}n!}dz.
\end{eqnarray*}
By using the (\ref{5}), we get
\begin{eqnarray*}\label{2.14}
S_2=\sum_{n=0}^{\infty}\frac{(a_{1},p)_{n}(a_{2})_{n}...(a_{r})_{n}y^n}
{(\beta_{1})_{n}(\beta_{2})_{n}...(\beta_{s})_{n}n!}2(\mu+n)b^{-(\mu+n)}\left(\frac{b}{2}\right)^{\delta+n}\frac{\Gamma(2\delta+2n)
\Gamma(\mu-\delta)}{\Gamma(1+\delta+\mu+n)}.
\end{eqnarray*}
\begin{eqnarray*}\label{2.15}
S_2=\frac{\Gamma(\mu-\delta)b^{\delta-\mu}}{2^{\delta-1}}\sum_{n=0}^{\infty}\frac{(a_{1},p)_{n}(a_{2})_{n}...(a_{r})_{n}}
{(\beta_{1})_{n}(\beta_{2})_{n}...(\beta_{s})_{n}}\frac{y^{n}}{n!b^{n}}\frac{\Gamma(2\delta+2n)\Gamma(\mu+n+1)
}{\Gamma(\mu+n)\Gamma(1+\delta+\mu+2n)}.
\end{eqnarray*}
Upon using the (\ref{1}), we obtain
\begin{eqnarray*}
S_2=\frac{\Gamma(\mu-\delta)b^{\delta-\mu}\Gamma(\delta)\Gamma(\delta+1/2)\Gamma(\mu+1)}{2^{\delta-1}\Gamma(\mu)\Gamma(1+\delta+\mu)}\\
\times{}{_{r+3}F_{s+3}}\left[\begin{array}{c l}
              (a_{1},p),a_{2},...,a_{r},\mu+1,\delta+1/2; \\\left.
                \quad\quad \quad\quad \quad \quad \quad\quad \quad \quad \quad\quad \quad \quad\quad \quad|\frac{yz}{b} \right.\\\left.
              \beta_{1},\beta_{2},...,\beta_{s},\mu,(\mu+\delta+1)/2,(\mu+\delta+2)/2 ;\, \,\right.
           \end{array}\right].
\end{eqnarray*}

\end{proof}
\section{Special Cases}
In this section, we present certain special cases of (2:1) and (2:2) as corollaries given below for extended Gauss'
hypergeometric and confluent hypergeometric functions (1:4) and (1:5).
\begin{corollary}\label{t3.1}
Let $a,\beta,\gamma,\delta \in \mathbb{C};$ and $\gamma \in \mathbb{C}/\mathbb{Z}_{0}^{-};$ with $ \Re(p)>0;
 \Re(\mu)>\Re(\delta)>0; p\geq0$ and $z>0$. Then the following formula holds true:
\begin{eqnarray*}\label{2.16}
\int_0^\infty z^{\delta-1} (z+b+\sqrt{z^{2}+2bz})^{-\mu} {_{2}F_{1}}\left[\begin{array}{c l}
              (a,p),\beta;\quad \quad \quad\quad\quad  \\\left.
               \quad\quad \quad \quad\quad \quad \frac{y}{(z+b+\sqrt{z^{2}+2bz})} \right.\\\left.
              \gamma;\quad \quad \quad\quad \quad \quad \quad \,\right.
           \end{array}\right]dz
\end{eqnarray*}
\begin{eqnarray}\label{2.17}
=\frac{\Gamma(2\delta)b^{\delta-\mu}\Gamma(\mu+1)\Gamma(\mu-\delta)}{2^{\delta-1}\Gamma(\mu)\Gamma(1+\delta+\mu)}{_{4}F_{3}}\left[\begin{array}{c l}
              (a,p),\beta,\mu+1,\mu-\delta; \\\left.
               \quad \quad \quad \quad\quad \quad \quad \quad\quad \quad \quad\quad \quad |\frac{y}{b} \right.\\\left.
              \gamma,\mu,\mu+\delta+1;\, \,\right.
           \end{array}\right].
\end{eqnarray}
\end{corollary}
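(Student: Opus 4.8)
The plan is to obtain Corollary \ref{t3.1} as an immediate specialization of Theorem \ref{t1}, with no independent computation required. The essential observation is that the extended Gauss hypergeometric function ${}_2F_1$ defined in (\ref{2}) is nothing but the case $r=2$, $s=1$ of the extended generalized hypergeometric function ${}_rF_s$ of (\ref{1}). Thus the entire result should drop out of the general theorem once the parameters are matched.

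First I would set $r=2$ and $s=1$ throughout the statement of Theorem \ref{t1}, together with the parameter identifications $a_1 \mapsto a$, $a_2 \mapsto \beta$, and $\beta_1 \mapsto \gamma$. Under this substitution the series
\[
\sum_{n=0}^{\infty}\frac{(a,p)_n(\beta)_n}{(\gamma)_n}\frac{z^n}{n!}
\]
appearing in the integrand is exactly the defining expansion (\ref{2}) of the extended ${}_2F_1$, so the left-hand side of (\ref{2.16}) coincides with the specialized left-hand side of (\ref{2.1}). The hypotheses transfer verbatim: the conditions $\Re(p)>0$, $\Re(\mu)>\Re(\delta)>0$, $p\ge 0$, $z>0$, and $\gamma\in\mathbb{C}\setminus\mathbb{Z}_0^-$ are precisely those inherited from Theorem \ref{t1} in this special case.

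On the right-hand side the output function ${}_{r+2}F_{s+2}$ becomes ${}_{4}F_{3}$, with numerator parameters $(a,p),\beta,\mu+1,\mu-\delta$ and denominator parameters $\gamma,\mu,\mu+\delta+1$, matching (\ref{2.17}) line for line. The gamma-function prefactor
\[
\frac{\Gamma(2\delta)\,b^{\delta-\mu}\,\Gamma(\mu+1)\,\Gamma(\mu-\delta)}{2^{\delta-1}\,\Gamma(\mu)\,\Gamma(1+\delta+\mu)}
\]
carries over unchanged, since it depends only on $\delta$, $\mu$, and $b$, and not on $r$ or $s$. The conclusion then follows directly from Theorem \ref{t1}.

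Since this is purely a matter of specialization, there is no genuine analytic obstacle. The only point requiring attention is the bookkeeping of the parameter lists: one must verify that the $(r+2)$-over-$(s+2)$ structure collapses correctly to $4$-over-$3$, so that no parameter is duplicated, dropped, or misplaced in passing from the general ${}_{r+2}F_{s+2}$ to the concrete ${}_4F_3$. Once this correspondence is confirmed, the proof is complete.
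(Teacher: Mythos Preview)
Your argument is correct, and it is in fact the cleaner route: you deduce the corollary as the specialization $r=2$, $s=1$, $a_1\mapsto a$, $a_2\mapsto\beta$, $\beta_1\mapsto\gamma$ of Theorem~\ref{t1}, which requires nothing beyond matching parameters. The paper, by contrast, does \emph{not} invoke Theorem~\ref{t1} at all but instead reruns the full computation from scratch: it expands the ${}_2F_1$ in the integrand as a power series, interchanges sum and integral, applies the Oberhettinger integral (\ref{5}) term by term, and then repackages the resulting series as a ${}_4F_3$. Your approach buys economy and makes the logical dependence on the main theorem explicit; the paper's approach is self-contained but redundant, since it is line-for-line the $r=2$, $s=1$ instance of the proof already given for Theorem~\ref{t1}.
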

\begin{proof}
Let $S_3$ be the left-hand side of (\ref{2.17}), and applying
\begin{eqnarray}\label{2.18}
_{2}F_{1}\left[\begin{array}{c l}
              (a,p),\beta;\quad \quad \quad\\\left.
              \quad \quad\quad \quad \quad\quad;z \right.\\\left.
              \gamma;\quad\quad\quad\,\right.
           \end{array}\right]\nonumber=\sum_{n=0}^{\infty}\frac{(a,p)_{n}(\beta)_{n}z^n}{(\gamma)_{n}n!},
\end{eqnarray}
where $a,\beta\in \mathbb{C} $and $\gamma \neq 0,-1,-2,... $ and $(z)_n$ is
the Pochhammer symbols.\\
To the integrand (\ref{2.17}), we have
\begin{eqnarray*}\label{2.20}
S_3=\int_0^\infty z^{\delta-1} (z+b+\sqrt{z^{2}+2bz})^{-\mu}\sum_{n=0}^{\infty}\frac{(a,p)_{n}(\beta)_{n}}
{(\gamma)_{n}n!}\left(\frac{y}{z+b+\sqrt{z^{2}+2bz}}\right)^{n}dz.
\end{eqnarray*}
By interchanging the order of integration and summation, which is verified by the uniform convergence of the series under the given assumption of theorem (\ref{2.17}), we have
\begin{eqnarray*}\label{2.21}
S_3=\int_0^\infty z^{\delta-1} (z+b+\sqrt{z^{2}+2bz})^{-(\mu+n)}\sum_{n=0}^{\infty}\frac{(a,p)_{n}(\beta)_{n}y^n}
{(\gamma)_{n}n!}dz.
\end{eqnarray*}
By using the (\ref{5}), we get
\begin{eqnarray*}\label{2.22}
S_3=\sum_{n=0}^{\infty}\frac{(a,p)_{n}(\beta)_{n}y^n}
{(\gamma)_{n}n!}2(\mu+n)b^{-(\mu+n)}\left(\frac{b}{2}\right)^{\delta}\frac{\Gamma(2\delta)
\Gamma(\mu+n-\delta)}{\Gamma(1+\delta+\mu+n)}.
\end{eqnarray*}
\begin{eqnarray*}
S_3=\frac{\Gamma(2\delta)b^{\delta-\mu}}{2^{\delta-1}}\sum_{n=0}^{\infty}\frac{(a,p)_{n}(\beta)_{n}}
{(\gamma)_{n}}\frac{y^{n}}{n!b^{n}}\frac{\Gamma(\mu+n+1)
\Gamma(\mu+n-\delta)}{\Gamma(\mu+n)\Gamma(1+\delta+\mu+n)}.
\end{eqnarray*}
Upon using the (\ref{1}), we obtain
\begin{eqnarray*}\label{2.23}
S_3=\frac{\Gamma(2\delta)b^{\delta-\mu}\Gamma(\mu+1)\Gamma(\mu-\delta)}{2^{\delta-1}\Gamma(\mu)\Gamma(1+\delta+\mu)}{_{4}F_{3}}\left[\begin{array}{c l}
              (a,p),\beta,\mu+1,\mu-\delta; \\\left.
               \quad \quad \quad \quad\quad \quad \quad \quad\quad \quad \quad\quad \quad |\frac{y}{b} \right.\\\left.
              \gamma,\mu,\mu+\delta+1;\, \,\right.
           \end{array}\right].
\end{eqnarray*}

\end{proof}
\begin{corollary}\label{t3.2}
Let $a,\beta,\gamma,\delta \in \mathbb{C};$ and $\gamma\in \mathbb{C}/\mathbb{Z}_{0}^{-};$ with $ \Re(p)>0;
 \Re(\mu)>\Re(\delta)>0; p\geq0$ and $z>0$. Then the following formula holds true:
\begin{eqnarray*}\label{2.24}
\int_0^\infty z^{\delta-1} (z+b+\sqrt{z^{2}+2bz})^{-\mu} {_{2}F_{1}}\left[\begin{array}{c l}
              (a,p),\beta;\quad \quad \quad\quad\quad  \\\left.
                \quad \quad\quad \quad \frac{yz}{(z+b+\sqrt{z^{2}+2bz})} \right.\\\left.
              \gamma;\quad \quad \quad\quad \quad \quad \quad \,\right.
           \end{array}\right]dz
\end{eqnarray*}
\begin{eqnarray}\label{2.25}
=\frac{\Gamma(\mu-\delta)b^{\delta-\mu}\Gamma(\delta)\Gamma(\mu+1)\Gamma(\delta+\frac{1}{2})}{2^{\delta-1}\Gamma(1+\delta+\mu)\Gamma(\mu)}{_{5}F_{4}}\left[\begin{array}{c l}
              (a,p),\beta,\mu+1,\delta,\delta+\frac{1}{2}; \\\left.
                \quad \quad\quad \quad \quad \quad\quad \quad \quad\quad \quad |\frac{yz}{b} \right.\\\left.
              \gamma,\mu,\frac{\mu+\delta+1}{2}, \frac{\mu+\delta+2}{2};\, \,\right.
           \end{array}\right].
\end{eqnarray}
\end{corollary}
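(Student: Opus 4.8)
This corollary is precisely the case $r=2$, $s=1$ of Theorem \ref{t2}, so in principle it follows by specialising the extended ${}_{r}F_{s}$ in (\ref{2.10}) to the extended Gauss function (\ref{2}); nevertheless I would reprove it directly, mirroring the proof of Theorem \ref{t2}, since the direct computation makes the origin of the half-integer parameters transparent. Writing $S_4$ for the left-hand side, the first move is to replace the extended ${}_{2}F_{1}$ by its defining series (\ref{2}), so that the integrand acquires the factor $\left(\frac{yz}{z+b+\sqrt{z^{2}+2bz}}\right)^{n}$. Under the hypotheses $\Re(\mu)>\Re(\delta)>0$ and $\Re(p)>0$ the series converges uniformly on $[0,\infty)$, which justifies interchanging the summation with the integration.

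The decisive structural point is that, unlike in Corollary \ref{t3.1}, the argument of the hypergeometric function carries a factor $z$. Pulling the sum outside therefore leaves the $n$-th inner integral with integrand $z^{\delta+n-1}(z+b+\sqrt{z^{2}+2bz})^{-(\mu+n)}$, so that both the exponent of $z$ and the exponent of the radical expression are shifted by $n$. I would then evaluate each inner integral by Oberhettinger's formula (\ref{5}) with the shifted parameters $\alpha=\delta+n$ and $\beta=\mu+n$; this yields the factor $2(\mu+n)b^{-(\mu+n)}(b/2)^{\delta+n}\Gamma(2\delta+2n)\Gamma(\mu-\delta)/\Gamma(1+\delta+\mu+2n)$ in front of the $n$-th term.

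The heart of the proof is to resolve the two gamma functions whose arguments contain $2n$, namely $\Gamma(2\delta+2n)$ and $\Gamma(1+\delta+\mu+2n)$; this is exactly the step that is absent from Corollary \ref{t3.1}, where $\alpha$ stayed equal to $\delta$. For this I would invoke the Legendre duplication formula $\Gamma(2w)=2^{2w-1}\pi^{-1/2}\Gamma(w)\Gamma(w+\tfrac12)$ twice: once with $w=\delta+n$, which splits $\Gamma(2\delta+2n)$ into a product featuring $\Gamma(\delta+n)\Gamma(\delta+n+\tfrac12)$, and once with $w=\tfrac{\mu+\delta+1}{2}+n$, which splits $\Gamma(1+\delta+\mu+2n)$ into a product featuring $\Gamma\!\left(\tfrac{\mu+\delta+1}{2}+n\right)\Gamma\!\left(\tfrac{\mu+\delta+2}{2}+n\right)$. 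Normalising each of these gamma functions by its value at $n=0$ turns it into a Pochhammer symbol, and this is precisely how the numerator parameters $\delta$ and $\delta+\tfrac12$ and the denominator parameters $\tfrac{\mu+\delta+1}{2}$ and $\tfrac{\mu+\delta+2}{2}$ are produced.

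Finally I would rewrite the remaining linear factor as $(\mu+n)=\mu\,(\mu+1)_n/(\mu)_n$, which supplies the last numerator parameter $\mu+1$ and denominator parameter $\mu$, gather all the $n$-independent gamma factors and powers into the stated coefficient, and read off the summation as the extended ${}_{5}F_{4}$ by comparison with the defining series (\ref{1}). I expect the only real obstacle to be careful bookkeeping of the several powers of $2$ (and the $\pi^{-1/2}$ factors) generated by the two applications of the duplication formula, together with the $(b/2)^{\delta+n}$ and $b^{-(\mu+n)}$ factors coming from (\ref{5}); once these are collected correctly, the identification of the series is immediate.
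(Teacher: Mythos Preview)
Your proposal is correct and follows essentially the same route as the paper: expand the extended ${}_2F_1$ as its defining series, interchange summation and integration, evaluate each inner integral by Oberhettinger's formula (\ref{5}) with $\alpha=\delta+n$ and $\beta=\mu+n$, and then recognise the resulting sum as an extended ${}_5F_4$. The only difference is cosmetic: the paper compresses your two applications of the Legendre duplication formula and the $(\mu+n)=\Gamma(\mu+n+1)/\Gamma(\mu+n)$ rewriting into the single phrase ``upon using (\ref{1})'', whereas you spell out explicitly how the half-integer numerator parameters $\delta,\delta+\tfrac12$ and the half-sum denominator parameters $\tfrac{\mu+\delta+1}{2},\tfrac{\mu+\delta+2}{2}$ arise.
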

\begin{proof}
Let $S_4$ be the left-hand side of (\ref{2.25}), and applying
\begin{eqnarray}\label{2.26}
_{2}F_{1}\left[\begin{array}{c l}
              (a,p),\beta;\quad \quad \quad\\\left.
             \quad \quad\quad \quad \quad\quad z \right.\\\left.
              \gamma;\quad\quad\quad\,\right.
           \end{array}\right]\nonumber=\sum_{n=0}^{\infty}\frac{(a,p)_{n}\beta_{n}z^n}{(\beta_{1})_{n}(\gamma)_{n}n!}
\end{eqnarray}
where $a,\beta\in \mathbb{C} $and $\gamma \neq 0,-1,-2,... $ and $(z)_n$ is
the Pochhammer symbols.\\
To the integrand (\ref{2.25}), we have
\begin{eqnarray*}\label{2.27}
S_4=\int_0^\infty z^{\delta-1} (z+b+\sqrt{z^{2}+2bz})^{-\mu}\sum_{n=0}^{\infty}\frac{(a,p)_{n}(\beta)_{n}}
{(\gamma)_{n}n!}\left(\frac{yz}{z+b+\sqrt{z^{2}+2bz}}\right)^{n}dz.
\end{eqnarray*}
By interchanging the order of integration and summation, which is verified by the uniform convergence of the series under the given assumption of theorem (\ref{2.25}), we have
\begin{eqnarray*}\label{2.28}
S_4=\int_0^\infty z^{\delta+n-1} (z+b+\sqrt{z^{2}+2bz})^{-(\mu+n)}\sum_{n=0}^{\infty}\frac{(a,p)_{n}(\beta)_{n}y^n}
{(\gamma)_{n}n!}dz.
\end{eqnarray*}
By using the (\ref{5}), we get
\begin{eqnarray*}\label{2.29}
S_4=\sum_{n=0}^{\infty}\frac{(a,p)_{n}(\beta)_{n}y^n}
{(\gamma)_{n}n!}2(\mu+n)b^{-(\mu+n)}\left(\frac{b}{2}\right)^{\delta+n}\frac{\Gamma(2\delta+2n)
\Gamma(\mu-\delta)}{\Gamma(1+\delta+\mu+n)}.
\end{eqnarray*}
\begin{eqnarray*}\label{2.30}
S_4=\frac{\Gamma(\mu-\delta)b^{\delta-\mu}}{2^{\delta-1}}\sum_{n=0}^{\infty}\frac{(a,p)_{n}(\beta)_{n}}
{(\gamma)_{n}}\frac{y^{n}}{n!b^{n}}\frac{\Gamma(2\delta+2n)\Gamma(\mu+n+1)
}{\Gamma(\mu+n)\Gamma(1+\delta+\mu+2n)}.
\end{eqnarray*}
Upon using the (\ref{1}), we obtain
\begin{eqnarray*}\label{2.31}
S_4=\frac{\Gamma(\mu-\delta)b^{\delta-\mu}\Gamma(\delta)\Gamma(\mu+1)\Gamma(\delta+\frac{1}{2})}{2^{\delta-1}\Gamma(1+\delta+\mu)\Gamma(\mu)}{_{5}F_{4}}\left[\begin{array}{c l}
              (a,p),\beta,\mu+1,\delta,\delta+\frac{1}{2}; \\\left.
                \quad \quad\quad \quad \quad \quad\quad \quad \quad\quad \quad |\frac{yz}{b} \right.\\\left.
              \gamma,\mu,\frac{\mu+\delta+1}{2}, \frac{\mu+\delta+2}{2};\, \,\right.
           \end{array}\right].
\end{eqnarray*}

\end{proof}
\begin{corollary}\label{t5}
Let $a,\beta,\gamma,\delta \in \mathbb{C};$ and $\gamma \in \mathbb{C}/\mathbb{Z}_{0}^{-};$ with $ \Re(p)>0;
 \Re(\mu)>\Re(\delta)>0; $ and $z>0$. Then the following formula holds true:
\begin{eqnarray*}\label{2.32}
\int_0^\infty z^{\delta-1} (z+b+\sqrt{z^{2}+2bz})^{-\mu} {_{2}F_{1}}\left[\begin{array}{c l}
              a,\beta;\quad \quad \quad\quad\quad \quad\quad \\\left.
               \quad\quad \quad \quad\quad \quad \frac{y}{(z+b+\sqrt{z^{2}+2bz})} \right.\\\left.
              \gamma;\quad \quad \quad\quad \quad \quad \quad \,\right.
           \end{array}\right]dz
\end{eqnarray*}
\begin{eqnarray}\label{2.33}
=\frac{\Gamma(2\delta)b^{\delta-\mu}\Gamma(\mu-\delta)\Gamma(\mu+1)}{2^{\delta-1}\Gamma(\mu)\Gamma(\mu+\delta+1)}{_{4}F_{3}}\left[\begin{array}{c l}
              a,\beta,\mu+1,\mu-\delta; \\\left.
                \quad\quad \quad \quad \quad\quad \quad \quad\quad \quad |\frac{y}{b} \right.\\\left.
              \gamma,\mu,\mu+\delta+1;\, \,\right.
           \end{array}\right].
\end{eqnarray}
\end{corollary}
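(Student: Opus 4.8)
The plan is to recognize that Corollary~\ref{t5} is simply the limiting case $p\to 0$ of Corollary~\ref{t3.1}: the generalized Pochhammer symbol reduces to the classical one, $(a,0)_n=(a)_n$, so the extended ${}_2F_1$ in the integrand collapses to Gauss' ordinary hypergeometric function. Accordingly I would reproduce the same three-step argument used for Theorem~\ref{t1}, namely expand the ${}_2F_1$ as its defining power series, interchange the order of summation and integration, and evaluate each resulting integral by Oberhettinger's formula~(\ref{5}).

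Concretely, writing $S$ for the left-hand side of~(\ref{2.33}) and inserting the defining series $\sum_{n\ge 0}(a)_n(\beta)_n/[(\gamma)_n\,n!]\,w^n$ with $w=y/(z+b+\sqrt{z^2+2bz})$, the factor $w^n$ combines with $(z+b+\sqrt{z^2+2bz})^{-\mu}$ to produce $(z+b+\sqrt{z^2+2bz})^{-(\mu+n)}$, so the $z$-integral of the $n$-th term is exactly~(\ref{5}) with $\alpha=\delta$ and $\beta=\mu+n$. After using $2(\mu+n)=2\,\Gamma(\mu+n+1)/\Gamma(\mu+n)$ to absorb the leading factor, this yields
\[
S=\frac{\Gamma(2\delta)\,b^{\delta-\mu}}{2^{\delta-1}}\sum_{n=0}^{\infty}\frac{(a)_n(\beta)_n}{(\gamma)_n}\,\frac{(y/b)^n}{n!}\,\frac{\Gamma(\mu+n+1)\,\Gamma(\mu+n-\delta)}{\Gamma(\mu+n)\,\Gamma(1+\delta+\mu+n)}.
\]

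The final step is purely formal: normalizing each Gamma quotient by its value at $n=0$ converts the ratios into Pochhammer symbols,
\[
\frac{\Gamma(\mu+n+1)}{\Gamma(\mu+1)}=(\mu+1)_n,\quad \frac{\Gamma(\mu+n-\delta)}{\Gamma(\mu-\delta)}=(\mu-\delta)_n,\quad \frac{\Gamma(\mu+n)}{\Gamma(\mu)}=(\mu)_n,\quad \frac{\Gamma(1+\delta+\mu+n)}{\Gamma(1+\delta+\mu)}=(\mu+\delta+1)_n,
\]
whereupon the surviving constant $\Gamma(\mu+1)\Gamma(\mu-\delta)/[\Gamma(\mu)\Gamma(\mu+\delta+1)]$ reassembles the prefactor displayed in~(\ref{2.33}) and the remaining sum is, by definition~(\ref{1}), the ${}_4F_3$ stated there. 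The one point requiring genuine care — and the step I would flag as the main obstacle — is the justification of the term-by-term integration: one must verify that the hypotheses $0<\Re(\delta)<\Re(\mu)$ force $0<\Re(\delta)<\Re(\mu+n)$ for every $n\ge 0$, so that each application of~(\ref{5}) is legitimate, and that the series converges uniformly on the range of integration so the interchange of $\sum$ and $\int$ is valid. Granting this, the identity~(\ref{2.33}) follows at once.
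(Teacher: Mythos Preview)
Your proposal is correct and mirrors the paper's own approach: the paper gives no separate proof for Corollary~\ref{t5}, presenting it as the $p=0$ specialization of Corollary~\ref{t3.1}, and your direct derivation is line-for-line the argument used there (and in Theorem~\ref{t1}) with $(a,p)_n$ replaced by $(a)_n$. The only addition you make beyond the paper is spelling out the Gamma-to-Pochhammer normalization and flagging the uniform-convergence check, both of which are improvements in rigor rather than differences in strategy.
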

\begin{corollary}\label{t6}
Let $a,\beta,\gamma,\delta \in \mathbb{C};$ and $\gamma\in \mathbb{C}/\mathbb{Z}_{0}^{-};$ with $ \Re(p)>0;
 \Re(\mu)>\Re(\delta)>0; p\geq0$ and $z>0$. Then the following formula holds true:
\begin{eqnarray*}\label{2.34}
\int_0^\infty z^{\delta-1} (z+b+\sqrt{z^{2}+2bz})^{-\mu} {_{2}F_{1}}\left[\begin{array}{c l}
              a,\beta;\quad \quad \quad\quad\quad  \\\left.
                \quad \quad\quad \quad \frac{yz}{(z+b+\sqrt{z^{2}+2bz})} \right.\\\left.
              \gamma;\quad \quad \quad\quad \quad \quad \quad \,\right.
           \end{array}\right]dz
\end{eqnarray*}
\begin{eqnarray}\label{2.35}
=\frac{\Gamma(\mu-\delta)b^{\delta-\mu}\Gamma(\mu+1)\Gamma(\delta)\Gamma(\delta+\frac{1}{2})}{2^{\delta-1}\Gamma(\mu)\Gamma(\mu+\delta+1)}{_{5}F_{4}}\left[\begin{array}{c l}
              a,\beta,\mu+1,\delta,\delta+\frac{1}{2} ; \\\left.
                \quad \quad\quad \quad \quad \quad\quad \quad \quad\quad \quad |\frac{yz}{b} \right.\\\left.
              \gamma,\mu,\mu+\delta+1,\frac{\mu+\delta+1}{2},\frac{\mu+\delta+2}{2} ;\, \,\right.
           \end{array}\right].
\end{eqnarray}
\end{corollary}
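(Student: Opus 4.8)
The quickest route is to obtain Corollary~\ref{t6} as the $p=0$ specialization of Corollary~\ref{t3.2}. By the definition of the generalized Pochhammer symbol recorded just after \eqref{1}, one has $(a;0)_n=(a)_n$, so putting $p=0$ the extended Gauss function ${}_2F_1\!\left[(a,p),\beta;\gamma;\,\cdot\,\right]$ in the integrand of Corollary~\ref{t3.2} collapses to the classical ${}_2F_1\!\left[a,\beta;\gamma;\,\cdot\,\right]$ appearing here, while the hypotheses $\Re(\mu)>\Re(\delta)>0$ and $z>0$ are unchanged. Hence the integral defining the left-hand side of Corollary~\ref{t6} is the $p=0$ instance of the one in Corollary~\ref{t3.2}, and the right-hand side \eqref{2.35} follows by the same specialization of \eqref{2.25}.

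For a self-contained derivation I would instead repeat the computation used for Theorem~\ref{t2} with $r=2$, $s=1$. First I expand the ${}_2F_1$ in the integrand as its defining power series and interchange summation with integration, which is justified by the uniform convergence already invoked for \eqref{2.10}. This pulls the factor $\big(yz/(z+b+\sqrt{z^2+2bz})\big)^n$ out of the integral, so that the $n$-th integrand becomes $z^{\delta+n-1}(z+b+\sqrt{z^2+2bz})^{-(\mu+n)}$. Each such integral is then evaluated by Oberhettinger's formula \eqref{5} with $\alpha=\delta+n$ and $\beta=\mu+n$, producing the factor $2(\mu+n)\,b^{-(\mu+n)}(b/2)^{\delta+n}\,\Gamma(2\delta+2n)\Gamma(\mu-\delta)/\Gamma(1+\delta+\mu+2n)$.

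The decisive step is to reduce this $n$-dependent Gamma quotient to a ratio of Pochhammer symbols so that the series can be recognised as a generalized hypergeometric function. Here the Legendre duplication formula $\Gamma(2w)=\pi^{-1/2}2^{2w-1}\Gamma(w)\Gamma(w+\tfrac{1}{2})$ is applied twice: to $\Gamma(2\delta+2n)=\Gamma\!\big(2(\delta+n)\big)$, giving the numerator pair $(\delta)_n(\delta+\tfrac{1}{2})_n$ and a factor $2^{2n}$; and to $\Gamma(1+\delta+\mu+2n)=\Gamma\!\big(2(\tfrac{\mu+\delta+1}{2}+n)\big)$, giving the denominator pair $\big(\tfrac{\mu+\delta+1}{2}\big)_n\big(\tfrac{\mu+\delta+2}{2}\big)_n$ and a factor $2^{-2n}$, so that the two powers of $2$ cancel. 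Using also $\Gamma(\mu+n+1)/\Gamma(\mu+n)=\big(\Gamma(\mu+1)/\Gamma(\mu)\big)(\mu+1)_n/(\mu)_n$, collecting the constant prefactor, and pairing the surviving symbols against $(y/b)^n/n!$, an appeal to \eqref{1} identifies the sum as the asserted ${}_5F_4$. I expect the duplication-formula bookkeeping, namely tracking the $2^{\pm 2n}$ factors and correctly splitting $\delta$ into $(\delta,\delta+\tfrac{1}{2})$ and $1+\delta+\mu$ into $(\tfrac{\mu+\delta+1}{2},\tfrac{\mu+\delta+2}{2})$, to be the only genuinely delicate point; everything else is the formal series manipulation already carried out for Theorems~\ref{t1} and \ref{t2}.
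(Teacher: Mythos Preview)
Your proposal is correct and matches the paper's approach: the paper states this corollary without proof, intending it as the $p=0$ specialization of Corollary~\ref{t3.2} (equivalently of Theorem~\ref{t2} with $r=2$, $s=1$), which is exactly your first paragraph. Your self-contained derivation in the remaining paragraphs is also sound and simply reproduces, in the $p=0$ case, the same series-expansion/Oberhettinger/duplication-formula computation the paper carries out in the proofs of Theorem~\ref{t2} and Corollary~\ref{t3.2}.
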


\end{document}